\title[Necessary and sufficient conditions \ldots]{Necessary and sufficient conditions for weak exponential instability of evolution operators}
\author{Nicolae Lupa}
\address{Author's Address. Department of Mathematics,
Faculty of Mathematics and Computer Science,
West University of Timi\c soara,
4 V.  P\^ arvan,  Blvd.,
300223 Timi\c soara, Romania}
\email{nlupa@math.uvt.ro}
\subjclass[2000]{34D20, 47D06, 93D30.}
\keywords{Evolution operators, weak exponential instability, Lyapunov function.}
\newtheoremstyle{theorem}
  {10pt}		  
  {10pt}  
  {\sl}  
  {\parindent}     
  {\bf}  
  {. }    
  { }    
  {}     
\theoremstyle{theorem}
\newtheorem{theorem}{Theorem}
\newtheorem{corollary}[theorem]{Corollary}
\newtheorem{proposition}[theorem]{Proposition}
\newtheorem{remark}[theorem]{Remark}
\newtheorem{example}[theorem]{Example}
\newtheoremstyle{defi}
  {10pt}		  
  {10pt}  
  {\rm}  
  {\parindent}     
  {\bf}  
  {. }    
  { }    
  {}     
\theoremstyle{defi}
\newtheorem{definition}[theorem]{Definition}
\begin{document}

\begin{abstract}
In this paper we give some necessary and sufficient conditions for weak exponential instability of evolution operators. Variants for the classical results due to Datko and Lyapunov are obtained.
\end{abstract}

\maketitle

\section{Introduction}

The stability theory has reached a considerable degree of maturity. In contrast with this, we do not yet know a coherent theory of unstable manifolds, even if in recent years new concepts of instability have been introduced and studied in \cite{Bi.2009},  \cite{Me.Po.2003}, \cite{Me.Sa.Sa.2001-1}, \cite{Mi.Ra.Sc.1998}, \cite{Pa.2006} and \cite{Me.St.2008-1}.

The aim of this paper is to give some necessary and sufficient conditions for the concept of weak exponential instability introduced in \cite{Lu.2009}.
We prove continuous and discrete variants for weak exponential instability of a well-known result from the stability theory due to Datko \cite{Da.1973} and generalized by Ichikawa \cite{Ic.1984}, van Neerven \cite{Ne.2002-1} and Pazy {\cite{Pa.1983}.

Our main objective is to propose a Lyapunov function to study the existence of weak exponential instability for evolution operators in Banach spaces.

The theory of Lyapunov functions has many application. For example, in \cite{Re.Ke.2010} the authors construct a type of Lyapunov function for a variety of SIR and SIRS models to obtain the stability of the endemic equilibrium states of epidemic systems and, in  \cite{Ra.2010} Ratchagit use a Lyapunov functional approach to establish exponential stability of linear systems with time varying delay.

The classical  theorem of Lyapunov states that if
A is a $n \times n$  complex matrix then A has all its characteristic roots lying in the half plane $Rez<0$ if and only if  the matrix equation $A^{*}B +BA= -I$  has a unique solution which is a positive definite Hermitian matrix.

This result was extended by Datko  \cite{Da.1970} for the case of $C_0$-semigroups in the following sense:

\begin{theorem}
A $C_0$-semigroup on a complex Hilbert space $X$ is uniformly exponentially stable if and only if there is an operator $B\in \mathcal{B}(X)$ with $B^{*}=B$ and $B\geq 0$ such that
\begin{equation}\label{eq.Ly-1}
<Ax,Bx>+<Bx,Ax>=-\parallel x\parallel^{2},\, \forall\,x\in D(A),
\end{equation}
where $A$ denotes the infinitesimal generator of the $C_0$-semigroup.
\end{theorem}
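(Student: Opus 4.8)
The plan is to establish both implications through the real-valued function $t \mapsto \langle BT(t)x, T(t)x\rangle$ evaluated along the orbits of the semigroup, using the classical Datko--Pazy theorem (\cite{Da.1973}, \cite{Pa.1983}) — a $C_0$-semigroup $(T(t))_{t\ge 0}$ is uniformly exponentially stable if and only if $\int_0^\infty \|T(t)x\|^2\,dt < \infty$ for every $x \in X$ — as the bridge between an integral estimate on the orbits and uniform exponential stability.

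\textbf{Necessity.} Suppose $\|T(t)\| \le M e^{-\omega t}$ with $\omega > 0$. I would define $B$ via the bounded sesquilinear form
\[
\langle Bx, y\rangle := \int_0^\infty \langle T(t)x, T(t)y\rangle\, dt,
\]
the integral converging absolutely since $\|T(t)x\|\,\|T(t)y\| \le M^2 e^{-2\omega t}\|x\|\,\|y\|$; this gives $B \in \mathcal{B}(X)$ with $B = B^*$ and $B \ge 0$. The key point is that, for $x \in D(A)$, one has on one hand $\langle BT(t)x, T(t)x\rangle = \int_t^\infty \|T(s)x\|^2\,ds$, which is differentiable in $t$ with derivative $-\|T(t)x\|^2$; and on the other hand, since $T(t)x \in D(A)$ and $\frac{d}{dt}T(t)x = AT(t)x$, the product rule for the form yields the derivative $\langle BAT(t)x, T(t)x\rangle + \langle BT(t)x, AT(t)x\rangle$. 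Equating these two expressions at $t = 0$ and using $\langle BAx, x\rangle = \langle Ax, B^*x\rangle = \langle Ax, Bx\rangle$ produces exactly the stated identity.

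\textbf{Sufficiency.} Conversely, given $B = B^* \ge 0$ in $\mathcal{B}(X)$ satisfying the Lyapunov equation, fix $x \in D(A)$ and put $u(t) = T(t)x \in D(A)$. Then $t \mapsto \langle Bu(t), u(t)\rangle$ is of class $C^1$ and, applying the equation at the point $u(t)$, $\frac{d}{dt}\langle Bu(t), u(t)\rangle = -\|u(t)\|^2$. Integrating over $[0,t]$ and discarding the nonnegative term $\langle BT(t)x, T(t)x\rangle$ gives $\int_0^t \|T(s)x\|^2\,ds \le \langle Bx, x\rangle \le \|B\|\,\|x\|^2$. Since $D(A)$ is dense in $X$ and, for each fixed $t$, the map $x \mapsto \int_0^t \|T(s)x\|^2\,ds$ is continuous, this estimate passes to all $x \in X$; letting $t \to \infty$ (monotone convergence) yields $\int_0^\infty \|T(s)x\|^2\,ds \le \|B\|\,\|x\|^2 < \infty$ for every $x \in X$, and Datko's theorem concludes the proof.

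The step I expect to be the main obstacle is the differentiation argument: rigorously justifying that $t \mapsto \langle BT(t)x, T(t)x\rangle$ is differentiable with the asserted derivative requires $x \in D(A)$, the product rule for a sesquilinear form combined with the self-adjointness of $B$, and — in the converse — the observation that the a priori bound is available only for fixed $t$ and for $x \in D(A)$, so the density/continuity passage must be carried out \emph{before} letting $t \to \infty$. The remaining ingredients (convergence of the defining integral, positivity of $B$, and the invocation of the Datko--Pazy theorem) are routine.
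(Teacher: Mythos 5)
Your proposal is correct, but there is nothing in the paper to compare it against: the paper states this theorem purely as background, attributing it to Datko \cite{Da.1970}, and gives no proof of its own (its actual contributions concern weak exponential instability of evolution operators, proved by entirely different means). Your argument is essentially Datko's original one and is sound on its own merits: constructing $B$ from the bounded, positive sesquilinear form $\int_0^\infty \langle T(t)x,T(t)y\rangle\,dt$ and differentiating $t\mapsto\langle BT(t)x,T(t)x\rangle$ at $t=0$ for $x\in D(A)$ gives necessity, while in the converse the identity $\frac{d}{dt}\langle BT(t)x,T(t)x\rangle=-\|T(t)x\|^2$, integration, positivity of $B$, the density passage at fixed $t$, and the $L^2$ version of the Datko--Pazy theorem give sufficiency. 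You correctly flag the two genuinely delicate points, namely that the product rule requires $x\in D(A)$ together with $\langle BAx,x\rangle=\langle Ax,B^*x\rangle=\langle Ax,Bx\rangle$, and that the extension from $D(A)$ to $X$ must be performed for each fixed $t$ before letting $t\to\infty$. I see no gap.
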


Condition (\ref{eq.Ly-1}) is equivalent with
\begin{equation}\label{eq.Ly-2}
A^{*}Q+QA=-I \text{ on } D(A),
\end{equation}
where $Q$ is a positive and bounded operator satisfying $QD(A)\subset D(A^{*})$.

Other Lyapunov type characterizations were obtained in \cite{Ba.Va.2009-2},  \cite{Gu.Zw.2006},  \cite{Po.Pr.Pr.2005} and \cite{Pr.Ic.1987}. We remark that these results hold in finite or in infinite dimensional Hilbert spaces. In this paper we extend this results in two direction. First, we consider the case of weak exponential instability and second, our approach is true even in Banach spaces.

\section{Notions and preliminaries}

Throughout this paper we will assume that $X$ is a Banach space and $\mathcal{B}(X)$ is the Banach algebra of all linear and bounded operators acting on $X$.
We denote by
$$T=\{(t,s)\in\mathbb{R}_{+}^{2} : t\geq{s}\}.$$

Let us first recall the classical notion of evolution operator:
\begin{definition}\label{d1}
An application  $U:T\longrightarrow{\mathcal{B}} \left(X\right)$ is called
{\it an evolution operator} if it has the following properties:

$e_1)$ $U(t,t)=I$ (the identity on $X$ ), for every  $t\geq 0$;

$e_2)$ $U(t,s)U(s,t_{0})=U(t,t_{0})$, for all $(t,s),\,(s,t_0)\in T$;

$e_3)$ for each $x\in X$ the function
\[
T\ni(t,s)\longmapsto U(t,s)x\in X
\]
is continuous.
\end{definition}

\begin{definition}\label{def2.2.1}
An evolution operator $U:T\longrightarrow \mathcal{B}(X)$ is said to be
{\it uniformly exponentially unstable} if there are $ N\geq 1$ and $\nu > 0$ such that
\begin{equation}\label{eq.u.e.i}
N \parallel U(t, t_{0} ) x_0 \parallel \,\geq \,e^{\nu(t-t_0)}\parallel{x_0}\parallel,\text{ for all  $(t, t_{0},x_0)\in T\times X$.}
\end{equation}

\end{definition}

\begin{remark}\rm
The evolution operator $U$ is uniformly exponentially unstable if and only if there are $ N\geq 1$ and $\nu > 0$ such that
$$ N \parallel U(t, t_{0} ) x_0 \parallel \,\geq \,e^{\nu(t-s)}\parallel U(s,t_0){x_0}\parallel,$$
for all $(t,s),(s, t_{0})\in T $ and $x_0\in X.$

This means that the orbit $U(\cdot,t_0)x_0$ is uniformly exponentially unstable, for all initial data $(t_0,x_0)\in\mathbb{R}_{+}\times X$.
\end{remark}

The concept of uniform exponential instability was generalized in a nonuniform way in \cite{Me.Sa.Sa.2001-1}, as follows:

\begin{definition}\label{def2.2.1-1}
An evolution operator $U:T\longrightarrow \mathcal{B}(X)$ is said to be
{\it non-uniformly exponentially unstable} if there are $N:\mathbb{R}_{+}\longrightarrow[1,\infty)$ and $\nu>0$ such that
\begin{equation}\label{eq.e.i}
N(t) \parallel U(t, t_{0} ) x_0 \parallel \,\geq \,e^{\nu(t-t_0)}\parallel{x_0}\parallel,\text{ for all  $(t, t_{0},x_0)\in T\times X$.}
\end{equation}

\end{definition}

A particular case of nonuniform exponential instability was considered by L. Barreira and C. Valls in \cite{Ba.Va.2009-2} as a part of nonuniform exponential dichotomy:

\begin{definition}\label{def2.2.1-2}
The evolution operator $U:T\longrightarrow \mathcal{B}(X)$ is said to be
{\it exponentially unstable} if there are $ N\geq 1$, $\alpha\geq 0$ and $\nu > 0$ such that
\begin{equation}\label{eq.n.e.i}
Ne^{\alpha t} \parallel U(t, t_{0} ) x_0 \parallel \geq e^{\nu(t-t_0)}\parallel{x_0}\parallel,\text{ for all  $(t, t_{0},x_0)\in T\times X$.}
\end{equation}

\end{definition}

\begin{remark}\rm
If $U$ is non-uniformly exponentially unstable, does not necessarily result that  $U$ is exponentially unstable.
\end{remark}

\begin{example}\rm
Let  $u:\mathbb{R}_{+}\longrightarrow [1,\infty)$ be a continuous function with the properties
$u\left(n+\frac{1}{n}\right)=e^{n^2}$ and $u(n)=1$, for all $n\in \mathbb{N}^{*}$.
The evolution operator $U$ defined by
$$U(t,s)=\frac{u(s)}{u(t)}e^{t-s}I,\,\forall\,(t,s)\in T$$
is non-uniformly exponentially unstable and it is not exponentially unstable.
\end{example}
\begin{proof}
Obviously, we have that
\begin{equation*}
u(t)\parallel U(t,t_0)x_0\parallel\geq e^{t-t_0}\parallel x_0\parallel,
\end{equation*}
for all  $(t,t_0,x_0)\in T\times X$. Therefore, $U$ is non-uniformly exponentially unstable. If we suppose that $U$ is exponentially unstable, there are $N\geq 1$, $\alpha\geq 0$ and $\nu>0$ such that
\begin{equation*}
Ne^{\alpha t}\parallel U(t,t_0)x_0\parallel \geq e^{\nu(t-t_0)}\parallel x_0\parallel,\text{ for all  $(t,t_0,x_0)\in T\times X.$}
\end{equation*}

In particular, for $t=n+\frac{1}{n}$, $t_0=n$, $n\in \mathbb{N}^{*}$ and $x_0\in X$ with $\parallel x_0\parallel=1$ we obtain
$$Ne^{\alpha(n+\frac{1}{n})}e^{\frac{1}{n}}\geq e^{\frac{\nu}{n}}e^{n^2}, \,\forall\,n\in \mathbb{N}^{*},$$ which is false. Therefore, $U$ is not exponentially unstable.

\end{proof}

One of the most important properties that defines the concept of uniform exponential instability is
\begin{equation}\label{limita}
\lim\limits_{t\rightarrow\infty}\parallel U(t,t_0)x_0\parallel=\infty,\text{ for } (t_0,x_0)\in\mathbb{R}_{+}\times X, \;x_0\neq0.
\end{equation}
For the concepts of exponential instability considered in Definition \ref{def2.2.1} and Definition \ref{def2.2.1-1}, this property is not necessarily fulfilled. For example, the evolution operator
$$U(t,s)=e^{-(t-s)}I,\text{ for } (t,s)\in T$$
is (non-uniformly) exponentially unstable and
$$\lim\limits_{t\rightarrow\infty}\parallel U(t,t_0)x_0\parallel=0,\text{ for every } (t_0,x_0)\in\mathbb{R}_{+}\times X.$$

If we suppose in Definition \ref{def2.2.1-1} that $\alpha\in[0,\nu)$, the condition (\ref{limita}) still holds true.
In this paper, we propose another type of  exponential instability for which condition (\ref{limita}) remains valid.
\begin{definition}\label{def2.2.2}
An evolution operator  $U$ is said to be {\it weakly exponentially unstable} if there are constants $N\geq1$ and $\nu>0$ such that for each  $x_0\in X$ there is $t_0=t_0(x_0)\geq 0$ with the property
\begin{equation}\label{eq.w.e.i}
N\parallel U(t,t_0)x_0 \parallel\,\geq\,e^{\nu(t-s)}\parallel U(s,t_0)x_0 \parallel,\text{ for all  $t\geq s\geq t_{0}$.}
\end{equation}

\end{definition}

We remark that this concept of exponential instability is more general than the individual instability (the corespondent for the individual stability from \cite{Bu.Po.2001}  and \cite{Pr.Po.Pr.2006-1}), where $(t_0,x_0)\in\mathbb{R}_{+}\times X$ is fixed.

\begin{remark}\rm
If the evolution operator $U$  is uniformly exponentially unstable
then it is weakly exponentially unstable. The following example shows that, even in finite dimensional Hilbert spaces, the converse is not valid.
\end{remark}

\begin{example}\rm
Let $X=\mathbb{R}^{2}$ with the Euclidian norm. The
evolution operator
$$U(t,t_0)(x_1,x_2)=(\xi_1,\xi_2),$$
where
$$\xi_1=e^{t-t_0}\cos{t}\left(x_1\cos{t_0}+x_2\sin{t_0}\right)+e^{-(t-t_0)}\sin{t}\left(x_1\sin{t_0}-x_2\cos{t_0}\right)$$
$$\xi_2=e^{t-t_0}\sin{t}\left(x_1\cos{t_0}+x_2\sin{t_0}\right)-e^{-(t-t_0)}\cos{t}\left(x_1\sin{t_0}-x_2\cos{t_0}\right)$$
is weakly exponentially unstable, but it is not uniformly exponentially unstable.
\end{example}
\begin{proof}
For each $x_0 \in \mathbb{R}^{2}$ there are  $r_0\geq 0$ and $t_0\in[0,2\pi)$ such that
$$x_0=(r_0 \cos{t_0},r_0 \sin{t_0}).$$
A simple computation shows that
\begin{equation*}
U(t,t_0)x_0=(r_0 e^{\,t-t_0}\cos{t},r_0 e^{\,t-t_0}\sin{t})
\end{equation*}
and so
$$\parallel U(t,t_0)x_0\parallel=r_0 e^{\,t-t_0}=e^{\,t-s}\parallel U(s,t_0)x_0\parallel, \text{ for all $t\geq s\geq t_0$.}$$
This implies that $U$ is weakly exponentially unstable.
On the other hand, for $x_0=(-\sin{t_0}, \cos{t_0})$ we obtain
$$ U(t,t_0)x_0=(-e^{-(t-t_0)}\sin{t}, e^{-(t-t_0)} \cos{t})$$
and hence
$$\parallel U(t,t_0)x_0\parallel=e^{-(t-t_0)}=e^{-(t-s)}\parallel U(s,t_0)x_0\parallel,$$
which proves that $U$ is not uniformly exponentially unstable.
\end{proof}

The next result may be regarded as an equivalent definition for the concept of weak exponential instability.

\begin{proposition}\label{p.f.i}
An evolution operator  $U:T\longrightarrow \mathcal{B}(X)$ is weakly exponentially unstable  iff there is a nondecreasing function
$f:\mathbb{R}_{+}\longrightarrow (0,\infty)$ with $\lim\limits_{t\rightarrow \infty} f(t)=+\infty$ such that for each $x_0\in X$ there is $t_0\geq 0$ with the property
\begin{equation}\label{eq.f.i}
\parallel U(t,t_0)x_0 \parallel\,\geq f(t-s)\parallel U(s,t_0)x_0\parallel,\text{ for all  $t\geq s\geq t_0$.}
\end{equation}

\end{proposition}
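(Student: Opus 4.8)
The plan is to prove the two implications separately, with the forward one being essentially a rewriting. If $U$ is weakly exponentially unstable with constants $N\geq 1$ and $\nu>0$ as in Definition \ref{def2.2.2}, I would simply take $f(t):=N^{-1}e^{\nu t}$: it is nondecreasing, strictly positive and satisfies $\lim_{t\to\infty}f(t)=+\infty$, and then \eqref{eq.f.i} is exactly \eqref{eq.w.e.i}. So the content of the proposition is in the converse.

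For the converse, suppose \eqref{eq.f.i} holds for such an $f$. First I would use $\lim_{t\to\infty}f(t)=+\infty$ together with monotonicity to fix, once and for all, a number $\delta>0$ with $q:=f(\delta)>1$. Given $x_0\in X$, the case $x_0=0$ is trivial, so I take $x_0\neq 0$ and let $t_0=t_0(x_0)\geq 0$ be as in the hypothesis; putting $y(t):=\parallel U(t,t_0)x_0\parallel$ for $t\geq t_0$, the choice $s=t_0$ in \eqref{eq.f.i} gives $y(t)\geq f(t-t_0)\parallel x_0\parallel>0$, so $y$ is strictly positive on $[t_0,\infty)$.

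The main step is a discretization argument. For $t\geq s\geq t_0$ I would write $t-s=k\delta+r$ with $k:=\lfloor (t-s)/\delta\rfloor\in\mathbb{N}$ and $r\in[0,\delta)$, then apply \eqref{eq.f.i} successively on the intervals $[s,s+\delta],\,[s+\delta,s+2\delta],\dots,[s+(k-1)\delta,s+k\delta]$ and finally on $[s+k\delta,t]$. Using $f(\delta)=q$ and $f(r)\geq f(0)$, this chains to $y(t)\geq f(0)\,q^{k}\,y(s)$. Since $k>(t-s)/\delta-1$, it follows that $y(t)\geq \tfrac{f(0)}{q}\,e^{\nu(t-s)}\,y(s)$ with $\nu:=\delta^{-1}\ln q>0$, and taking $N:=\max\{1,\,q/f(0)\}\geq 1$ yields \eqref{eq.w.e.i}. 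I would stress that $N$ and $\nu$ depend only on $f$, not on $x_0$, while only $t_0$ depends on $x_0$, which is precisely what Definition \ref{def2.2.2} demands.

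Since every inequality used is elementary, I do not anticipate a genuine obstacle here; the one point requiring a little care is the behaviour of $f$ near the origin — a priori $f(r)$ could be very small for small $r$ — and the nondecreasing hypothesis is exactly what rescues the estimate via $f(r)\geq f(0)>0$. One could avoid the floor function by a slightly more direct estimate, but the discretization seems the cleanest bookkeeping.
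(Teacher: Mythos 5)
Your proposal is correct and follows essentially the same route as the paper: the necessity is the same substitution $f(t)=N^{-1}e^{\nu t}$, and the sufficiency is the same discretization $t-s=k\delta+r$ with step $\delta$ chosen so that $f(\delta)>1$, chaining \eqref{eq.f.i} over the subintervals and using $f(r)\geq f(0)$ near the origin. The only differences are cosmetic (notation and the exact form of the constant $N$).
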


\begin{proof} \emph{Necessity} is a simple computation for  $f(t)=\frac{1}{N} e^{\nu t}$,  $\forall\,t\geq 0$.

\emph{Sufficiency}. From $\lim\limits_{t\rightarrow \infty} f(t)=+\infty$ it results that there is $c>0$ such that $f(c)>1$. Letting $\nu>0$  such that $f(c)=e^{\nu c}$, we consider $N=\frac{f(c)}{f(0)}\geq 1$. From hypothesis we have that for each  $x_0\in X$ there is $t_0\geq 0$ satisfying relation (\ref{eq.f.i}).

For all $t\geq s\geq t_0$ there are $n\in \mathbb{N}$ and $r\in [0,c)$ such that $t=s+nc+r$. Successively, we obtain
\begin{align*}
f(c)\parallel U(t,t_0)x_0\parallel &\geq f(c)f(r)\parallel U(s+nc,t_0)x_0\parallel\\
&\geq f(0)f(c)\parallel U(s+nc,t_0)x_0\parallel\\
&\geq \cdots \geq f(0)f(c)^{n+1}\parallel U(s,t_0)x_0\parallel\\
&\geq f(0)e^{\nu(t-s)}\parallel U(s,t_0)x_0\parallel,
\end{align*}
for all $t\geq s\geq t_0$.
Thus, $$N\parallel U(t,t_0)x_0\parallel\geq e^{\nu(t-s)}\parallel U(s,t_0)x_0\parallel,$$
for all $t\geq s\geq t_0$, which proves that $U$ is weakly exponentially unstable.

\end{proof}

\section{The main results}

An important characterization for weak exponential instability is given by:

\begin{theorem}\label{T.Datko.i-1}
The evolution operator $U:T\longrightarrow \mathcal{B}(X)$ is weakly exponentially unstable if and only if there are   $p,M,K\geq 1$ and $\omega>0$ such that for each $x_0\in X$ there is $t_0\geq 0$ with
\begin{enumerate}
\item[(i)]
$\parallel U(s,t_0)x_0\parallel\leq M e^{\omega(t-s)} \parallel U(t,t_0)x_0\parallel,$ for all $t\geq s\geq t_0;$
\item[(ii)]
$\int\limits_{t_0}^{t}\parallel U(\tau,t_0)x_0\parallel^{p}d\tau\leq K\parallel U(t,t_0)x_0\parallel^{p},$
for all  $t\geq t_0$.

\end{enumerate}
\end{theorem}

\begin{proof}
\emph{Necessity}. Obviously, if $U$ is weakly exponentially unstable, condition (i) holds. For $p\geq 1$ and $K=\max\left\{\frac{N^{p}}{\nu p},1\right\}$, where $N\geq 1$ and $\nu>0$ are given by Definition \ref{def2.2.2}, we have that for each $x_0\in X$ there is $t_0\geq 0$ such that
\begin{align*}
\int\limits_{t_0}^{t}\parallel U(\tau,t_0)x_0\parallel^{p}d\tau&\leq N^{p}\int\limits_{t_0}^{t}e^{-\nu p(t-\tau)}d\tau \parallel U(t,t_0)x_0\parallel^{p}\\
&\leq K \parallel U(t,t_0)x_0\parallel^{p},\text{ for all $t\geq t_0$.}
\end{align*}

\emph{Sufficiency}. We suppose that there are  $p,M,K\geq 1$ and $\omega>0$ such that for each $x_0\in X$ there is $t_0\geq 0$ satisfying relations (i) and (ii).

For  $t\geq s+1>s\geq t_0$ we have
\begin{align*}
M^{p}K\parallel U(t,t_0)x_0\parallel^{p}&\geq M^{p}\int\limits_{t_0}^{t}\parallel U(\tau,t_0)x_0\parallel^{p}d\tau\\
&\geq\int\limits_{s}^{t}e^{-\omega p(\tau-s)}d\tau\parallel U(s,t_0)x_0\parallel^{p}\\
&\geq \int\limits_{0}^{1}e^{-\omega p u}du \parallel U(s,t_0)x_0\parallel^{p}\\
&=\frac{1-e^{-\omega p}}{\omega p} \parallel U(s,t_0)x_0\parallel^{p}
\end{align*}
and for $t\in [s,s+1)$ it follows that
$$M^{p}\parallel U(t,t_0)x_0\parallel^{p}\geq e^{-\omega p}\parallel U(s,t_0)x_0\parallel^{p}.$$
Hence
\begin{equation*}
\parallel U(t,t_0)x_0\parallel^{p}\geq L\parallel U(s,t_0)x_0\parallel^{p},\text{ for all } t\geq s\geq t_0,
\end{equation*}
where $L=\min\left\{\frac{1}{M^{p}K}\frac{1-e^{-\omega p}}{\omega p},\frac{e^{-\omega p}}{M^{p}}\right\}.$

On the other hand
\begin{align*}
K\parallel U(t,t_0)x_0\parallel^{p}&\geq \int\limits_{t_0}^{t}\parallel U(\tau,t_0)x_0\parallel^{p}d\tau\geq\int\limits_{s}^{t}\parallel U(\tau,t_0)x_0\parallel^{p}d\tau\\
&\geq L(t-s)\parallel U(s,t_0)x_0\parallel^{p},\text{ for all $t\geq s\geq t_0$.}
\end{align*}

This proves that
\begin{equation*}
\left(1+K^{1/p}\right) \parallel U(t,t_0)x_0\parallel \geq L^{1/p}\left[1+(t-s)^{1/p}\right]\parallel U(s,t_0)x_0\parallel,
\end{equation*}
for all $t\geq s\geq t_0$. By Proposition \ref{p.f.i}, we conclude that $U$ is weakly exponentially unstable.

\end{proof}

\begin{remark}\rm
Theorem \ref{T.Datko.i-1} can be considered a version for the case of weak exponential instability of a well-known result due to Datko \cite{Da.1973}.
\end{remark}

In the following corollary, we give a discrete version of the previous theorem.

\begin{corollary}
The evolution operator $U:T\longrightarrow \mathcal{B}(X)$ is weakly exponentially unstable if and only if there are   $p,M,K\geq 1$ and $\omega>0$ such that for each $x_0\in X$ there is $t_0\geq 0$ with
\begin{enumerate}
\item[(i)]
$\parallel U(s,t_0)x_0\parallel\leq M e^{\omega(t-s)} \parallel U(t,t_0)x_0\parallel,$ for all $t\geq s\geq t_0;$
\item[(ii)]
$\sum\limits_{n=0}^{[t-t_0]}\parallel U(t-n,t_0)x_0\parallel^{p} \leq K\parallel U(t,t_0)x_0\parallel^{p},$
for all  $t\geq t_0$.
\end{enumerate}
\end{corollary}

\begin{proof}
\emph{Necessity}. By Definition \ref{def2.2.2}, we have that there are $N\geq 1$ and $\nu >0$ with the property that for each $x_0\in X$ there is $t_0\geq 0$ such that
\begin{align*}
\sum\limits_{n=0}^{[t-t_0]}\parallel U(t-n,t_0)x_0\parallel^{p}&\leq N^{p}\left(\sum\limits_{n=0}^{[t-t_0]} e^{-\nu p n}\right)\parallel U(t,t_0)x_0\parallel^{p}\\
&\leq K\parallel U(t,t_0)x_0\parallel^{p},
\end{align*}
for all $t\geq t_0$, where
$p\geq 1$ is fixed and $K=\max\left\{\frac{N^{p}}{1-e^{-\nu p}},1\right\}.$

\emph{Sufficiency}. Let $p\geq 1$ and $K>0$ with the property that for each $x_0\in X$ there is $t_0\geq 0$ such that (i) and (ii) hold. This implies
\begin{align*}
\int\limits_{t_0}^{t} \parallel U(\tau,t_0)x_0\parallel^{p}d\tau&=\int\limits_{0}^{t-t_0} \parallel U(t-s,t_0)x_0\parallel^{p}ds\\
&\leq \sum\limits_{n=0}^{[t-t_0]}\int\limits_{n}^{n+1}\parallel U(t-s,t_0)x_0\parallel^{p}ds\\
&\leq M^{p} \sum\limits_{n=0}^{[t-t_0]}\int\limits_{n}^{n+1}e^{\omega p(s-n)}ds \parallel U(t-n,t_0)x_0\parallel^{p}\\
&\leq M^{p}e^{\omega p} \sum\limits_{n=0}^{[t-t_0]} \parallel U(t-n,t_0)x_0\parallel^{p}\\
&\leq KM^{p}e^{\omega p}\parallel U(t,t_0)x_0\parallel^{p},
\end{align*}
for all  $t\geq t_0$.

Applying  Theorem \ref{T.Datko.i-1} we deduce that $U$ is weakly exponentially unstable.
\end{proof}

\begin{definition}\label{def.f.Ly}\rm
A function $L:T\times X\longrightarrow \mathbb{R}$ is said to be a \emph{Lyapunov function} for the evolution operator $U$ if for each $x_0\in X$ there is $t_0\geq 0$  such that  $L$ is a solution of the Lyapunov equation
\begin{equation}\label{eq.Ly.}
L(t,t_0,x_0)+\int\limits_{s}^{t} \parallel U(\tau,t_0)x_0\parallel ^2\,d\tau= L(s,t_0,x_0),\;t\geq s\geq t_0.
\end{equation}
\end{definition}

In the following, we give a characterization for weak exponential instability in terms of the existence of a Lyapunov function.

\begin{theorem}
The evolution operator $U:T\longrightarrow \mathcal{B}(X)$ is weakly exponentially unstable if and only there are a Lyapunov function $L:T\times X\longrightarrow \mathbb{R}_{-} $  and  $M,\omega,m>0$ such that
\begin{enumerate}
\item[(i)]
$\parallel U(s,t_0)x_0\parallel\leq M e^{\omega(t-s)} \parallel U(t,t_0)x_0\parallel,$ for all $t\geq s\geq t_0;$
\item[(ii)]
$ | L(t,t_0,x_0) |\leq m\parallel U(t,t_0)x_0\parallel^2,\text{ for all } t\geq t_0.$
\end{enumerate}
\end{theorem}

\begin{proof}
\emph{Necessity}. We define the function
$$L(t,s,x)=-\int\limits_{s}^{t}\parallel U(\tau,s)x\parallel^2d\tau, \text{ for } (t,s)\in T \text{ and } x\in X.$$

It is obvious that $L(t,s,x)\leq 0$ and from Definition \ref{def2.2.2} we have that there are $N\geq 1$ and $\nu>0$ such that for each $x_0\in X$ there is $t_0\geq 0$ with
\begin{equation*}
|L(t,t_0,x_0)|=\int\limits_{t_0}^{t}\parallel U(\tau,t_0)x_0\parallel^{2}d\tau\leq m\parallel U(t,t_0)x_0\parallel^{2},
\end{equation*}
for all $t\geq t_0$, where $m=\frac{N^{2}}{2\nu}>0$.
A simple computation shows
$$L(s,t_0,x_0)-L(t,t_0,x_0)-\int\limits_{s}^{t}\parallel U(\tau,t_0)x_0\parallel^{2}d\tau=0, \text{ for all $t\geq s\geq t_0$.}$$

\emph{Sufficiency}. We suppose that there are a Lyapunov function $L:T\times X\longrightarrow \mathbb{R}_{-} $  and $M,\omega,m>0$ such that (i) and (ii) hold.

Then
\begin{align*}
\int\limits_{t_0}^{t}\parallel U(\tau,t_0)x_0\parallel^{2}d\tau&= L(t_0,t_0,x_0)-L(t,t_0,x_0)\leq-L(t,t_0,x_0)\\
&=|L(t,t_0,x_0)|\leq m\parallel U(t,t_0)x_0\parallel^2,
\end{align*}
for all $t\ge t_0$. By Theorem \ref{T.Datko.i-1} we deduce that $U$ is weakly exponentially unstable.

\end{proof}

\begin{remark}\rm
The preceding theorem is an extension for the case of weak exponential instability of a result due to Megan and Bu\c se in \cite{Me.Bu.1992}.
\end{remark}

\section*{Acknowledgement}

The work is supported by CNCSIS -- UEFISCSU, project number PN II - IDEI  1080/2008.

\end{document}